\newcommand{\Z}{\mathbb{Z}}
\renewcommand{\P}{\mathbb{P}}
\newcommand{\E}{\mathbb{E}}
\newcommand{\mc}[1]{\mathcal{#1}}
\newcommand{\bs}[1]{\boldsymbol{#1}}
\newcommand{\sss}[1]{\scriptscriptstyle #1}
\newcommand{\indic}[1]{\mathbbm{1}_{\{#1\}}}
\newcommand{\cweak}{\overset{w}{\to}}
\newcommand{\moo}{v}
\newtheorem{theorem}{Theorem}
\newtheorem{lemma}{Lemma}
\let\originalleft\left
\let\originalright\right
\renewcommand{\left}{\mathopen{}\mathclose\bgroup\originalleft}
\renewcommand{\right}{\aftergroup\egroup\originalright}
\theoremstyle{definition}
\newtheorem{rem}{Remark}
\newcommand{\floor}[1]{\lfloor #1 \rfloor}
\newcommand{\doubleblind}[1]{}
\title{The mean square displacement of random walk\\ on the Manhattan lattice}
\author{Nicholas R. Beaton\footnote{nrbeaton@unimelb.edu.au } \,  and Mark Holmes\footnote{holmes.m@unimelb.edu.au}\\
University of Melbourne\\
Parkville, VIC 3010\\
Australia }
\begin{document}
\maketitle
\abstract{We give an explicit formula for the mean square displacement of the random walk on the $d$-dimensional Manhattan lattice after $n$ steps, for all $n$ and all dimensions $d\ge 2$.}

\bigskip

\noindent {\bf Keywords:}  Manhattan lattice, random walk, mean square displacement.
\bigskip

Let $e_1,\dots, e_d$ be the standard basis vectors for $\Z^d$.   An \emph{oriented lattice} on $\Z^d$ is a directed graph on $\Z^d$ in which each bi-infinite line in $\Z^d$ has an orientation.  To be more precise, it is a directed graph with vertex set $\Z^d$ and directed edge set $E$ such that 
 for each $j\in [d]$ and each $x\in \Z^d$ with $x\cdot e_j=0$, exactly one of the following holds:
 \begin{itemize}
\item  for all $k\in \Z$, $ (x+ke_j,x+(k+1)e_j)\in E$ and $ (x+ke_j,x+(k-1)e_j)\notin E$, or
\item for all $k\in \Z$, $ (x+ke_j,x+(k-1)e_j)\in E$ and $ (x+ke_j,x+(k+1)e_j)\notin E$.
\end{itemize}
Given such a directed graph, one can ask about the behaviour of a random walk $\bs{X}=(X_n)_{n \in \Z_+}$ on the graph which chooses its next move uniformly from  the $d$ available directed edges at its current location.  At this level of generality the graph may be reducible in the sense that some sites might not be reachable from some other sites.  Two natural examples of irreducible  graphs are (i) the so-called Manhattan lattice (where orientations in neighbouring lines oscillate -- see below), and (ii) the setting where orientations of lines are determined by i.i.d.~fair coin tosses.  

The former (which is the topic of this paper) is much easier to understand than the latter.  For example, in 2 dimensions recurrence is know for (i) but is unresolved for (ii).  Moreover, the random walk in (i) is diffusive in all dimensions (e.g.~see below), while in (ii) it is believed to be diffusive only if $d\ge 4$ (see e.g.~\cite{KT,LTV,T}).
  For a modified 2-dimensional model (where both vertical steps are available from each site, but horizontal lines are oriented) introduced by Matheron and de Marsily%
 it has been shown that the i.i.d.~setting is in fact transient despite the oscillating case being recurrent \cite{CP,CG-PPS}.
  
 The main result of this paper is related to  the diffusivity of the random walk on the Manhattan lattice, which we now proceed to define explicitly.  The Manhattan lattice in $d\ge 2$ dimensions is the directed graph $M_d=(V,E)$ with vertex set $V=\Z^d=\{x=(x^{\sss[1]},\dots, x^{\sss[d]}): x^{\sss[i]}\in \Z \text{ for all }i \in [d]\}$ and (directed) edge set $E$ defined as follows:
\begin{align*}
(x,x+e_i)\in E & \text{ if and only if }\Big(\sum_{j \ne i}x^{\sss[j]}\Big)\text{ is even, and} \\
(x,x-e_i)\in E& \text{  if and only if }\Big(\sum_{j \ne i}x^{\sss[j]}\Big)\text{ is odd}.
\end{align*}
There are exactly $d$ directed edges pointing out of each $x\in V$, and $d$ directed edges pointing in.  For each $x\in \Z^d$ the sets $\{y:(x,y)\in E\}$ and $\{y: (y,x)\in E\}$ are disjoint and together include all $2d$ neighbours of $x$.   See Figure \ref{fig:manhat2} for a depiction of the case $d=2$ and Figure \ref{fig:manhat3} for a depiction of the case $d=3$.

\begin{figure}
\begin{center}
\begin{tikzpicture}[scale=0.6]
\foreach \x in {-6,-4,-2,0,2,4,6}
\foreach \y in {-6,-5,-4,-3,-2,-1,0,1,2,3,4,5,6}
{\draw[thick,->] (\x,\y)--(\x,\y+1/2);}

\foreach \x in {-5,-3,-1,1,3,5}
\foreach \y in {-6,-5,-4,-3,-2,-1,0,1,2,3,4,5,6}
{\draw[thick,->] (\x,\y)--(\x,\y-1/2);}

\foreach \x in {-6,-4,-2,0,2,4,6}
\foreach \y in {-6,-5,-4,-3,-2,-1,0,1,2,3,4,5,6}
{\draw[thick,->] (\y,\x)--(\y+1/2,\x);}

\foreach \x in {-5,-3,-1,1,3,5}
\foreach \y in {-6,-5,-4,-3,-2,-1,0,1,2,3,4,5,6}
{\draw[thick,->] (\y,\x)--(\y-1/2,\x);}
\node at (0,0) {$\bs{o}$};

\end{tikzpicture}
\end{center}
\caption{A portion of the Manhattan lattice in 2 dimensions.}
\label{fig:manhat2}
\end{figure}
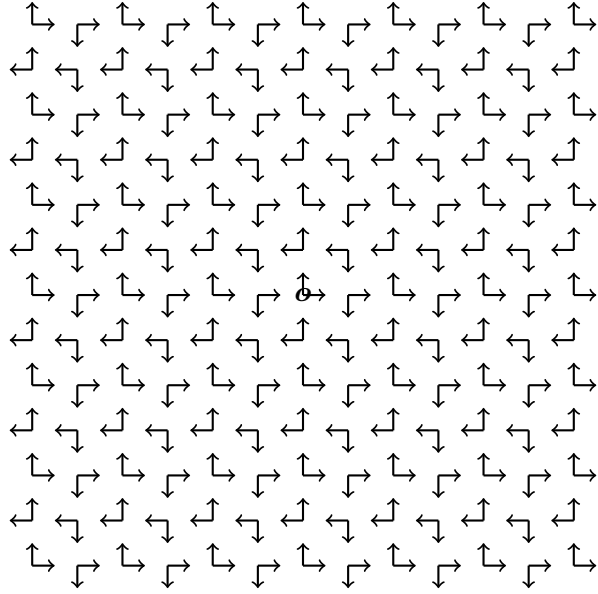
Fix $d$ and let $\bs{X}=(X_i)_{i \in \Z_+}$ be the Markov chain on  $M_d$ with $X_0=0\in \Z^d$ and transition probabilities $p_{x,x+e}=1/d$ if $(x,x+e)\in E$, and $0$ otherwise.   Then there are exactly $d^n$ distinct possibilities for the path $(X_0,\dots, X_n)$.  By observing the walker at all times $0=T_0<T_1<T_2<\dots$ when the walker is at sites in $A:=\{x\in \Z^d:x^{\sss{[i]}}\text{ is even for each }i\in [d]\}$ one can prove  recurrence in 2 dimensions and transience in larger dimensions, as well as a law of large numbers ($n^{-1}X_n \to 0$ a.s.) and central limit theorem $n^{-1/2}X_n\cweak \mc{N}(0,\Sigma)$ for some $\Sigma$ in general dimensions.  However it is not the case that $\E[X_n]=0$ ($\E[X_n]$ is to be understood as the vector $(\E[X_n^{\sss{[1]}}],\dots, \E[X_n^{\sss{[d]}}])$) nor that there exists $\sigma^2>0$ such that $\E[|X_n|^2]=\sigma^2n $ for all $n$.  Indeed we prove  the following result.
\begin{theorem}
\label{thm:main}
The location $X_n$ of the random walk in the $d\ge 2$-dimensional Manhattan lattice after $n\in \Z_+$ steps satisfies
\begin{align}
\E_d[X_n]&=\left(\sum_{i=1}^d e_i\right)\left[\dfrac{1-\big(\frac{2-d}{d}\big)^n}{2(d-1)}\right],\label{mean}\\
\E_d\Big[|X_n|^2\Big]&=\dfrac{\big(2(d-1)n-1\big)d^n+(2-d)^n}{d^{n-1}2(d-1)^2}.\label{main}
\end{align}
\end{theorem}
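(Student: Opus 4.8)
The plan is to exploit the fact that the walk is a \emph{deterministic} function of the i.i.d.\ sequence of chosen directions. Let $I_1,I_2,\dots$ be independent and uniform on $[d]$, where $I_k$ is the coordinate in which the $k$-th step is taken, and write $Y_k:=X_k-X_{k-1}$ for the $k$-th increment. By definition of $M_d$ we have $Y_k=\varepsilon_{I_k}(X_{k-1})\,e_{I_k}$, where $\varepsilon_i(x):=+1$ if $(x,x+e_i)\in E$ and $-1$ otherwise. The defining parity conditions for $E$ give the compact formula $\varepsilon_i(x)=(-1)^{\sum_{j\ne i}x^{\sss{[j]}}}$, so the sign in direction $i$ depends on $x$ only through the parities of its coordinates. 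Introducing the counts $N_i(m):=\#\{k\le m:\,I_k=i\}$, the coordinate $X_{k-1}^{\sss{[j]}}$ has the same parity as $N_j(k-1)$, whence $\sum_{j\ne i}X_{k-1}^{\sss{[j]}}\equiv (k-1)-N_i(k-1)\pmod 2$ and therefore $\varepsilon_i(X_{k-1})=(-1)^{(k-1)+N_i(k-1)}$. This reduces every expectation below to a computation with parities of binomial counts, for which I will repeatedly use $\E\big[(-1)^{\mathrm{Bin}(m,1/d)}\big]=\big(\tfrac{d-2}{d}\big)^{m}$.

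For the mean \eqref{mean}, symmetry across coordinates gives $\E_d[X_n]=(\sum_i e_i)\,\E_d[X_n^{\sss{[1]}}]$. Writing $Y_k^{\sss{[i]}}=\indic{I_k=i}\,\varepsilon_i(X_{k-1})$ and splitting on the value of $I_k$, which is independent of $X_{k-1}$, gives $\E_d[X_n^{\sss{[i]}}]=\frac1d\sum_{k=1}^n\E[\varepsilon_i(X_{k-1})]$. Because $N_i(k-1)\sim\mathrm{Bin}(k-1,1/d)$, the binomial identity above yields $\E[\varepsilon_i(X_{k-1})]=(-1)^{k-1}\big(\tfrac{d-2}{d}\big)^{k-1}=\big(\tfrac{2-d}{d}\big)^{k-1}$, and summing the resulting geometric series (whose common ratio is $\tfrac{2-d}{d}$, so that $1-\tfrac{2-d}{d}=\tfrac{2(d-1)}{d}$) produces exactly \eqref{mean}.

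For the second moment I expand $|X_n|^2=\sum_{k,l=1}^n Y_k\cdot Y_l$. Since each $Y_k$ is supported on the single coordinate $I_k$, the dot product $Y_k\cdot Y_l$ vanishes unless $I_k=I_l$; the diagonal terms give $\sum_k |Y_k|^2=n$. For $k<l$, conditioning on $\{I_k=I_l=i\}$ (probability $1/d^2$) and using $\varepsilon_i(X_{k-1})\varepsilon_i(X_{l-1})=(-1)^{(k-1)+(l-1)}(-1)^{N_i(l-1)-N_i(k-1)}$, I note that $N_i(l-1)-N_i(k-1)=1+\mathrm{Bin}(l-k-1,1/d)$ on this event (the $+1$ coming from step $k$ itself, the binomial from the free steps $k+1,\dots,l-1$). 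Taking expectations and summing over $i$ gives the clean correlation
\begin{equation*}
\E[Y_k\cdot Y_l]=\tfrac1d\big(\tfrac{2-d}{d}\big)^{\,l-k-1}\qquad(k<l),
\end{equation*}
and collecting the pairs with a fixed gap $q=l-k$ yields
\begin{equation*}
\E_d\big[|X_n|^2\big]=n+\frac{2}{d}\sum_{q=1}^{n-1}(n-q)\Big(\tfrac{2-d}{d}\Big)^{q-1}.
\end{equation*}

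It remains to evaluate this arithmetico-geometric sum. Writing $u=\tfrac{2-d}{d}$ and using the closed forms for $\sum_{j}u^j$ and $\sum_j j\,u^j$, the sum $\sum_{q=1}^{n-1}(n-q)u^{q-1}$ equals $\big(n-1-nu+u^{n}\big)/(1-u)^2$; substituting $u=\tfrac{2-d}{d}$ (so $1-u=\tfrac{2(d-1)}{d}$) and simplifying gives \eqref{main}, with the value $u=0$ at $d=2$ reproducing $\E_2[|X_n|^2]=2n-1$ as a degenerate case. The main obstacle is the correlation step: getting the conditioning right so that the intermediate free steps contribute exactly the geometric factor $\big(\tfrac{2-d}{d}\big)^{l-k-1}$, together with the attendant sign bookkeeping. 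The concluding summation is routine, and its consistency can be checked against the values $n=0,1$ (namely $0$ and $1$).
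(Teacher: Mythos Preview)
Your argument is correct. For the mean~\eqref{mean} your binomial-parity computation is essentially the same as the paper's: both evaluate $\E[\varepsilon_i(X_{k-1})]$ via $\E\big[(-1)^{\mathrm{Bin}}\big]$ and sum a geometric series (the paper phrases it as the probability that a $\mathrm{Bin}(n,(d-1)/d)$ is even, which is the complementary count $n-N_i(n)$ to your $N_i(k-1)$).

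For the second moment~\eqref{main} your route is genuinely different from the paper's. The paper does not compute increment correlations at all; instead it introduces the local environment $L_n=\ell(X_n)$, observes the deterministic relation $L_{n+1}=-L_n+2(X_{n+1}-X_n)$, and uses it to derive the second-order linear recurrence
\[
v_{n+2}-\tfrac{2}{d}\,v_{n+1}-\tfrac{d-2}{d}\,v_n=2,\qquad v_n:=\E_d[|X_n|^2],
\]
which the stated formula is then checked to satisfy. You instead compute the autocovariance $\E[Y_k\cdot Y_l]=\tfrac1d\big(\tfrac{2-d}{d}\big)^{l-k-1}$ directly from the representation $\varepsilon_i(X_{k-1})=(-1)^{(k-1)+N_i(k-1)}$ and then sum an arithmetico-geometric series. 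Your approach buys an explicit, clean increment-correlation formula (interesting in its own right and making the stationarity of increments in the gap $l-k$ transparent), while the paper's approach packages the dependence structure into a short recurrence and avoids the final closed-form summation entirely. Both are short; yours is more constructive, theirs more algebraic.
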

Our main result is \eqref{main}.  The numerator in \eqref{main} is always divisible by $2(d-1)^2$.  Note also that $n^{-1}\E_d[|X_n|^2] \to \frac{d}{d-1}$ as $n \to \infty$.  
\begin{rem}\label{rem:2d}
In the case $d=2$, \eqref{main} reduces to $\E_d\big[|X_n|^2\big]=2n-1$.
\end{rem}
Nadine Guillotin-Plantard  \cite{GP} in her PhD thesis proved that the random walk $\bs{X}$ is recurrent in 2 dimensions (and transient in higher dimensions) by examining return probabilities.  In 2~dimensions  $S_n=\floor{X_{2n}/2}$ is a simple symmetric random walk (where $\floor{\cdot}$ is applied componentwise) \cite{GP}.  Note that the number of distinct paths for the simple random walk (in 2 dimensions) of length $n$ is $4^n=2^{2n}$, where the latter is the number of distinct paths of length $2n$ in the Manhattan lattice.  Such a relation cannot hold in higher dimensions since we cannot have $(2d)^n=d^{mn}$ for any integer $m$ when $d>2$.  

As noted above, the graph $M_d$ is irreducible in general dimensions.  This fact is a special case of a more general result which is itself an exercise (see e.g.~\cite{HolmesRWRE}).   Since there are exactly $d$ directions available from each site, one can ask how many different distinct  local environments $\ell(x)=(\ell^{\sss[1]}(x),\dots,\ell^{\sss[d]}(x)) \in \{-1,1\}^d$ there are (as we vary $x$), where $\ell^{\sss[i]}(x) = 1$ if $(x,x+e_i) \in E$ and $\ell^{\sss[i]}(x)=-1$ otherwise.  Clearly there are at most $2^d$ possible local environments.   It is an exercise (see e.g.~\cite{HolmesRWRE}) to show that in even dimensions all of these local environments occur, while in odd dimensions exactly half of them occur.

\begin{figure}
\begin{center}
\begin{tikzpicture}
\foreach \x in {1,2,3,4}
\foreach \y in {1,2,3,4}
{
\node[circle,fill=black,scale=0.7] at (2*\x,2*\y) {};
\node[circle, fill=blue,scale=0.6] at (2*\x+0.6,2*\y+0.4) {};
\node[circle, fill=red,scale=0.5] at (2*\x+0.6+0.6,2*\y+0.4+0.4) {};
}

\foreach \x in {1,2,3,4}
\foreach \y in {1,2}
{
\draw[-stealth,very thick] (2*\x,4*\y)--(2*\x+1,4*\y);
\draw[-stealth,very thick,blue] (2*\x+0.6,4*\y+0.4)--(2*\x+0.6-1,4*\y+0.4);
\draw[-stealth,very thick,red,dotted] (2*\x+0.6+0.6,4*\y+0.4+0.4)--(2*\x+0.6+0.6+1,4*\y+0.4+0.4);
\draw[-stealth,very thick] (2*\x,4*\y-2)--(2*\x-1,4*\y-2);
\draw[-stealth,very thick,blue] (2*\x+0.6,4*\y+0.4-2)--(2*\x+0.6+1,4*\y+0.4-2);
\draw[-stealth,very thick,red,dotted] (2*\x+0.6+0.6,4*\y-2+0.8)--(2*\x+0.6+0.6-1,4*\y-2+0.4+0.4);
}

\foreach \y in {1,2,3,4}
\foreach \x in {1,2}
{
\draw[-stealth,very thick] (4*\x,2*\y)--(4*\x,2*\y+1);
\draw[-stealth,very thick] (4*\x-2,2*\y)--(4*\x-2,2*\y-1);
\draw[-stealth,very thick,blue] (4*\x+0.6,2*\y+0.4)--(4*\x+0.6,2*\y-1+0.4);
\draw[-stealth,very thick,blue] (4*\x+0.6-2,2*\y+0.4)--(4*\x+0.6-2,2*\y+1+0.4);
\draw[-stealth,very thick,red,dotted] (4*\x+0.6+0.6,2*\y+0.4+0.4)--(4*\x+0.6+0.6,2*\y+1+0.4+0.4);
\draw[-stealth,very thick,red,dotted] (4*\x+0.6+0.6-2,2*\y+0.4+0.4)--(4*\x+0.6+0.6-2,2*\y-1+0.4+0.4);
}

\foreach \y in {1,3}
\foreach \x in {1,3}{
\draw[-stealth,very thick] (2*\x,2*\y)--(2*\x+0.3,2*\y+0.2);
\draw[-stealth,very thick] (2*\x+2,2*\y)--(2*\x+2-0.3,2*\y-0.2);
\draw[-stealth,very thick,blue] (2*\x+0.6,2*\y+0.4)--(2*\x+0.6+0.3,2*\y+0.4+0.2);
\draw[-stealth,very thick,blue] (2*\x+0.6+2,2*\y+0.4)--(2*\x+0.6+2-0.3,2*\y-0.2+0.4);
\draw[-stealth,very thick,red,dotted] (2*\x+0.6+0.6,2*\y+0.4+0.4)--(2*\x+0.6+0.6+0.3,2*\y+0.4+0.4+0.2);
\draw[-stealth,very thick,red,dotted] (2*\x+0.6+0.6+2,2*\y+0.4+0.4)--(2*\x+0.6+0.6+2-0.3,2*\y-0.2+0.4+0.4);
}
\foreach \y in {2,4}
\foreach \x in {2,4}{
\draw[-stealth,very thick] (2*\x,2*\y)--(2*\x+0.3,2*\y+0.2);
\draw[-stealth,very thick] (2*\x-2,2*\y)--(2*\x-2-0.3,2*\y-0.2);
\draw[-stealth,very thick,blue] (2*\x+0.6,2*\y+0.4)--(2*\x+0.6+0.3,2*\y+0.2+0.4);
\draw[-stealth,very thick,blue] (2*\x+0.6-2,2*\y+0.4)--(2*\x+0.6-2-0.3,2*\y-0.2+0.4);
\draw[-stealth,very thick,red,dotted] (2*\x+0.6+0.6,2*\y+0.4+0.4)--(2*\x+0.6+0.6+0.3,2*\y+0.2+0.4+0.4);
\draw[-stealth,very thick,red,dotted] (2*\x+0.6+0.6-2,2*\y+0.4+0.4)--(2*\x+0.6+0.6-2-0.3,2*\y-0.2+0.4+0.4);
}
\end{tikzpicture}
\end{center}
\caption{A portion of the 3-dimensional Manhattan lattice:  vertices of the same colour are in the same plane, blue is ``behind'' black, and red is ``behind'' blue.}
\label{fig:manhat3}
\end{figure}
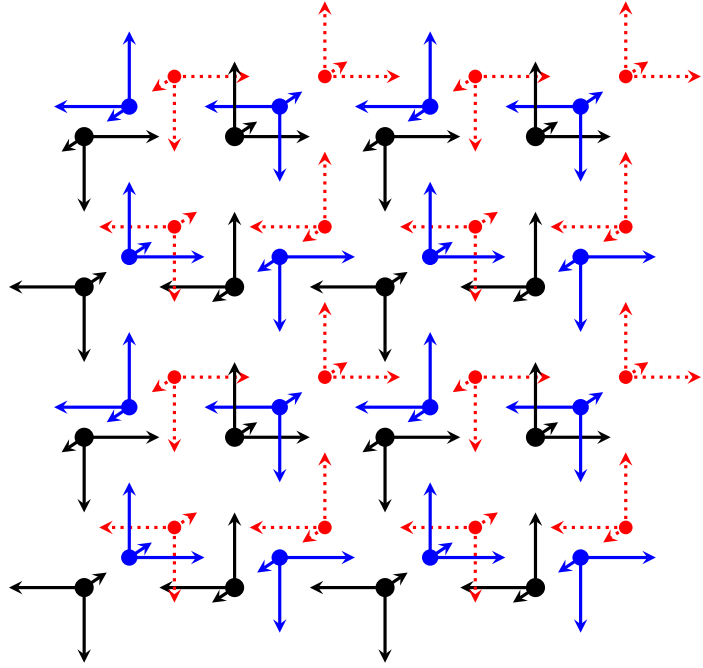

\begin{proof}[Proof of Theorem  \ref{thm:main} \eqref{mean}]
Note that a.s.
\begin{align*}
\E[X_{n+1}-X_n|X_n]=\frac{1}{d}\sum_{i=1}^de_i \Big[\indic{(\sum_{j\ne i}X_n^{\sss[j]}) \bmod 2=0}-\indic{(\sum_{j\ne i}X_n^{\sss[j]}) \bmod 2=1}\Big].
\end{align*}
Taking expectations of both sides yields
\begin{align}
\E[X_{n+1}-X_n]=\frac{1}{d}\sum_{i=1}^de_i \Big[2\P\Big((\sum_{j\ne i}X_n^{\sss[j]}) \bmod 2=0\Big)-1\Big].\label{giraffe1}
\end{align}
The probability appearing in \eqref{giraffe1} is just the probability that a Bin$(n,(d-1)/d)$ random variable is even.  Therefore 
\[\P\Big((\sum_{j\ne i}X_n^{\sss[j]}) \bmod 2=0\Big)=\frac12  +\frac12(1-2(d-1)/d)^n=\frac12+\frac12 \cdot \Big(\frac{2-d}{d}\Big)^n.\]
It follows that 
\[\E[X_{n+1}-X_n]=\frac{1}{d}\sum_{i=1}^de_i \Big(\frac{2-d}{d}\Big)^n.\]
Finally, we get that 
\[\E[X_n]=\sum_{j=0}^{n-1}\E[X_{j+1}-X_j]=\frac{1}{d}\sum_{i=1}^de_i \sum_{j=0}^{n-1}\Big(\frac{2-d}{d}\Big)^j,\]
and evaluating the geometric sum yields the claim.
\end{proof}

\begin{proof}[Proof of Theorem \ref{thm:main} \eqref{main}]
Let $\moo_n=\E_d[|X_n|^2]$.  Then $v_0=0$ and $v_1=1$.  We claim that for all $n\ge 0$,
\begin{equation}\label{eqn:vn_recurrence}
	\moo_{n+2}-\frac{2}{d}\moo_{n+1}-\Big(\frac{d-2}{d}\Big)\moo_n=2.
\end{equation}
It is then easy to verify that \eqref{main} solves this recurrence.  It therefore suffices to prove \eqref{eqn:vn_recurrence}.

Let $\mc{E}=\{e_1,\dots, e_d,-e_1,\dots, -e_d\}$.  Then for all $e \in \mc{E}$ we have 
 \begin{equation}
 |x+e|^2-|x|^2=2x \cdot e+1,\label{easysq}
 \end{equation}
since both sides are equal to $(x+e+x)\cdot (x+e-x)$.

Recall that $\ell(x)=(\ell^{\sss[1]}(x),\dots,\ell^{\sss[d]}(x)) \in \{-1,1\}^d$ denotes the local environment at vertex $x=(x^{[1]},\dots,x^{[d]})$, and let $L_n=\ell(X_n)$.
Using \eqref{easysq} with $e=X_{n+1}-X_n$ we get a.s.
	\begin{equation} 
		\mathbb{E}_d\left[|X_{n+1}|^2-|X_n|^2\,\big|\, X_n \right] = 2X_n \cdot \mathbb{E}_d\left[(X_{n+1}-X_n)\,\big|\,X_n\right]+1=	\frac{2}{d}X_n  \cdot  L_n+1.\label{hello1}
	\end{equation}
	Similarly, a.s.,
	\begin{align*} 
		\mathbb{E}_d\left[|X_{n+2}|^2-|X_{n+1}|^2\,\big|\, X_{n+1},X_n \right] &=	2X_{n+1}\cdot \E_d[ X_{n+2}-X_{n+1}|X_{n+1},X_n]+1\nonumber\\
		&=\frac{2}{d}X_{n+1}\cdot  L_{n+1}+1.
	\end{align*}	
	It follows that a.s.
	\[\mathbb{E}_d\left[|X_{n+2}|^2-|X_{n+1}|^2\,\big|\, X_n \right] =\frac{2}{d}\E_d\left[X_{n+1}\cdot L_{n+1}\big|\,X_n\right]+1.\]
Now \[L^{\sss[i]}_{n+1}=\begin{cases}
L^{\sss[i]}_{n}, & \text{ if }X_{n+1}-X_n=\pm e_i\\
-L^{\sss[i]}_{n}, & \text{ otherwise. }
\end{cases}\]
This means that $L_{n+1}=-L_n+2(X_{n+1}-X_n)$, so a.s.
\begin{align}
\mathbb{E}_d\left[|X_{n+2}|^2-|X_{n+1}|^2\,\big|\, X_n \right] &=\frac{2}{d}\E_d\left[X_{n+1}\cdot (-L_n+2(X_{n+1}-X_n))\,\big|\,X_n\right]+1\nonumber\\
&=\frac{2}{d}\Big[-L_n\cdot \E_d[X_{n+1}|X_n]+2\E_d[X_{n+1}\cdot(X_{n+1}-X_n)\,\big|\,X_n]\Big]+1\nonumber\\
&=\frac{2}{d}\Big[-L_n\cdot (\frac{1}{d}L_n+X_n)+2(1+X_n\cdot \E_d[X_{n+1}-X_n\,\big|\,X_n])\Big]+1\nonumber\\
&=\frac{2}{d}\Big[-1-L_n \cdot X_n+2(1+X_n\cdot \frac{1}{d}L_n)\Big]+1\nonumber\\
&=\frac{d+2}{d}+\frac{2(2-d)}{d^2}X_n \cdot L_n.\label{hello2}
\end{align}
Multiply \eqref{hello1} by $(d-2)/d$ and add to \eqref{hello2} to get, a.s., 
\begin{align*}
&\frac{d-2}{d}\mathbb{E}_d\left[|X_{n+1}|^2-|X_n|^2\,\big|\, X_n \right]+\mathbb{E}_d\left[|X_{n+2}|^2-|X_{n+1}|^2\,\big|\, X_n \right] \\
&=\frac{d-2}{d}\left[\frac{2}{d}X_n  \cdot  L_n+1\right]+\frac{d+2}{d}+\frac{2(2-d)}{d^2}X_n \cdot L_n\\
&=2.
\end{align*}
In other words, $v_{n+2}-\frac{2}{d}v_{n+1}-\frac{d-2}{d} v_n=2$, so \eqref{eqn:vn_recurrence} holds as claimed.
\end{proof}

\subsection*{Acknowledgements}
We thank Nadine Guillotin-Plantard for helpful conversations, as well as for  supplying us with an excerpt from her thesis.

\bibliographystyle{plain}

\end{document}